\tikzset{myloop above/.style={loop, out=130, in = 50,min distance =8mm}}
\tikzset{myloop below/.style={loop, out=-130, in = -50,min distance =8mm}}
\tikzset{blacknode/.style={shape= circle, fill = black, inner sep = 0pt, outer sep = 0pt, minimum size = 3pt,draw}}
\newtheorem{lem}{Lemma}[section]
\newtheorem{cor}[lem]{Corollary}
\newtheorem{thm}[lem]{Theorem}
\theoremstyle{definition}
\newtheorem{defn}[lem]{Definition}
\theoremstyle{remark}
\DeclareMathOperator\Cayley{Cayl}
\DeclareMathOperator\Sch{Sch}
\DeclareMathOperator\stab{Stab}
\DeclareMathOperator\ab{ab}
\DeclarePairedDelimiter\abs{\lvert}{\rvert}
\newcommand*{\field}[1]{\mathbf{#1}}
\newcommand*{\Z}{\field{Z}}
\newcommand{\setst}[2]{\{#1\ |\ #2\}}
\title{Property~FW and wreath products of groups: a~simple approach using Schreier graphs}
\author{Paul-Henry Leemann\thanks{Supported by grant 200021\textunderscore188578 of the Swiss National Fund for Scientific Research.}, Grégoire Schneeberger}
\date{\today}
\begin{document}
\maketitle
\begin{abstract}
The group property~FW stands in-between the celebrated Kazdhan's property~(T) and Serre's property~FA. Among many characterizations, it might be defined, for finitely generated groups, as having all Schreier graphs one-ended.

It follows from the work of Y. Cornulier that a finitely generated wreath product $G\wr_XH$ has property~FW if and only if both $G$ and $H$ have property~FW and $X$ is finite.
The aim of this paper is to give an elementary, direct and explicit proof of this fact using Schreier graphs.
\end{abstract}
\section{Introduction}
Property~FW is a group property~that is (for discrete groups) a weakening of the celebrated Kazdhan's property~(T). It was introduced by Barnhill and Chatterji in \cite{Barnhill2008}. It is a fixed point property~for actions on wall spaces or, equivalently, on CAT(0) cube complexes. Therefore it stands between property~FH (fixed points on Hilbert spaces, equivalent to property~(T) for discrete groups) and property~FA (fixed points on \emph{arbres}\footnote{\emph{Arbres} is the french word for trees.}). It is known that all these properties are different, see  \cite{Cornulier2013} for examples of groups that distinguish them.

When working with group properties, it is natural to ask if they are stable under ``natural'' group operations. One such operation, of great use in geometric group theory, is the wreath product, that generalizes the direct product of two groups, see Definition~\ref{Def:WreathProd}.

In the context of properties defined by fixed points of actions, the first result concerning wreath products is due to Cherix, Martin and Valette and later refined by Neuhauser and concerns property~(T).
\begin{thm}[\cite{Cherix2004,Neuhauser2005a}] \label{T:Wreath_prop_T}
Let $G,H$ be two discrete groups with $G$ non-trivial and $X$ a set on which $H$ acts. The wreath product $G \wr_X H$ has property~(T) if and only if $G$ and $H$ have property~(T) and $X$ is finite.
\end{thm}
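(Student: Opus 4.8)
The plan is to argue throughout via property~FH, the fixed-point property~for affine isometric actions on Hilbert spaces, which agrees with property~(T) for all the groups in sight: a discrete group with property~(T) is finitely generated, hence countable, so the Delorme--Guichardet theorem applies. Write $\Gamma = G\wr_X H = G^{(X)}\rtimes H$, where $H$ permutes the coordinates of $G^{(X)}$ through its action on $X$. I will also use the standard permanence properties of property~(T)---stability under quotients, under finite direct products, and under extensions with both kernel and quotient having~(T)---together with the elementary fact that a coboundary is a bounded function while an unbounded $1$-cocycle can never be one, so that an unbounded cocycle witnesses the failure of~FH.

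For the \emph{if} direction, suppose $G$ and $H$ have property~(T) and $X$ is finite. Then $G^{(X)} = G^X$ is a finite direct product of copies of $G$ and so has property~(T); feeding the short exact sequence $1 \to G^X \to \Gamma \to H \to 1$ into the extension property~yields property~(T) for $\Gamma$. This half is immediate once the permanence properties are granted.

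For the \emph{only if} direction I treat $H$, $G$, and the finiteness of $X$ in turn. Since $H = \Gamma/G^{(X)}$ is a quotient of $\Gamma$, it inherits property~(T). For $G$, fix a base point $x_0 \in X$ and start from an arbitrary affine isometric action of $G$, with orthogonal part $\pi$ on a Hilbert space $\mathcal H$ and $1$-cocycle $b$. I would promote it to an affine isometric action of $\Gamma$ on $\ell^2(X,\mathcal H)$: let $\sigma$ be the orthogonal representation in which each coordinate is acted on by $\pi$ and $H$ permutes coordinates through its action on $X$, and define $\beta(f,h) = \sum_{x\in X}\delta_x\otimes b(f_x)$ for $f = (f_x)_x \in G^{(X)}$, a finite sum because $b(e)=0$. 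A direct check against the semidirect-product multiplication shows $\beta$ is a $1$-cocycle for $\sigma$. As $\Gamma$ has~FH, this action has a fixed vector $v$, and reading off its $x_0$-component gives $\pi(g)v_{x_0} + b(g) = v_{x_0}$ for all $g \in G$; thus $v_{x_0}$ is a fixed point for the original $G$-action, and $G$ has~FH, hence property~(T).

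The crux is the finiteness of $X$. Assume $G \neq \{e\}$ and $\Gamma$ has property~(T); I would derive a contradiction from $X$ infinite by exhibiting an unbounded cocycle. Take the left regular representation $\lambda$ of $G$ on $\ell^2(G)$ with its standard cocycle $b(g) = \delta_g - \delta_e$, so that $\|b(g)\| = \sqrt2$ for every $g \neq e$, and form $\sigma$ and $\beta$ on $\ell^2(X, \ell^2(G))$ exactly as above. Then $\|\beta(f,h)\|^2 = 2\,\#\{x \in X : f_x \neq e\}$, and since $X$ is infinite and $G$ is nontrivial this is unbounded on $\Gamma$. An unbounded cocycle is not a coboundary, so the corresponding affine action of $\Gamma$ has no fixed point, contradicting~FH; hence $X$ is finite. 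I expect the real work to sit in this construction and in the companion one for $G$: verifying the cocycle identity across the semidirect product, handling the infinite orthogonal sum $\ell^2(X,\mathcal H)$ carefully, and confirming that unboundedness of $\beta$ genuinely precludes a fixed vector. These checks are routine but carry the actual content of the theorem.
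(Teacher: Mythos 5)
Your proof is correct, but there is nothing in the paper to compare it against: Theorem~\ref{T:Wreath_prop_T} is stated with a citation to Cherix--Martin--Valette and Neuhauser and never proved here; the paper's own arguments (Section~\ref{Section:Proof}) concern the FW analogue, Theorem~\ref{Thm:Main}. Your route is the classical analytic one, and its steps check out. The identification of (T) with FH is legitimate, but you should make the countability bookkeeping explicit: Guichardet's direction fails for uncountable discrete groups, and for $G$ you only know countability because $G$ embeds into $\Gamma=G\wr_XH$ (as the functions supported at $x_0$), which is finitely generated once it has (T); this also silently uses that $X\neq\emptyset$, a hypothesis the theorem needs anyway (if $X=\emptyset$ then $G\wr_XH=H$ and the statement is false for non-trivial $G$ without (T)). The ``if'' direction is indeed immediate from permanence under finite products and extensions. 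In the ``only if'' direction, your induced cocycle does satisfy the cocycle identity for the multiplication $(f,h)(f',h')=(f\cdot h.f',hh')$: both sides of $\beta(\gamma_1\gamma_2)=\beta(\gamma_1)+\sigma(\gamma_1)\beta(\gamma_2)$ reduce to $\beta(f,h)+\sum_x\delta_x\otimes\pi(f_x)\,b(f'_{h^{-1}.x})$, so evaluating a fixed vector at the coordinate $x_0$ gives FH for $G$, and taking $b(g)=\delta_g-\delta_e$ gives $\|\beta(f,h)\|^2=2\,|\mathrm{supp}(f)|$, unbounded when $X$ is infinite and $G$ is non-trivial; note that $b$ itself is a coboundary of $\lambda$ (namely of the vector $\delta_e$), and the whole point is that its formal fixing vector $\sum_x\delta_x\otimes\delta_e$ has infinite norm after induction, so $\beta$ is not one. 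It is worth observing how closely this parallels the paper's FW argument in Lemma~\ref{Lem:Wreath_groups_ends}: your space $\ell^2(X,\ell^2(G))=\ell^2(X\times G)$ is exactly the linearization of the imprimitive action of $G\wr_XH$ on $G\times X$ used there, and the unboundedness of $\beta$ (it counts the lamps switched on) is the analytic counterpart of the combinatorial fact that each leaf $Y_g$ is attached to the rest of the Schreier graph at the single vertex $(g,x_0)$, so infinitely many leaves preclude one-endedness. The graph-theoretic approach buys an elementary proof with no Hilbert-space machinery, at the cost of finite-generation and finitely-many-orbits hypotheses; your cocycle approach needs Delorme--Guichardet but handles arbitrary $X$ in one stroke, which is precisely why the (T) statement carries no such hypotheses.
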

In \cite{Cornulier2013} Cornulier studied property~FW using \emph{cardinal definite functions} and while not explicitly stated in \cite{Cornulier2013}, the following result can be extracted from his work.
\begin{thm}\label{Thm:Main}
Let $G,H$ be two finitely generated groups with $G$ non-trivial and $X$ a set on which $H$ acts with finitely many orbits. The wreath product $G \wr_X H$ has property~FW if and only if $G$ and $H$ have property~FW and $X$ is finite.
\end{thm}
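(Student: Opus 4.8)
The plan is to run everything through the characterization recalled in the abstract: a finitely generated group has property~FW if and only if all of its Schreier graphs (with respect to one, equivalently any, finite generating set) have at most one end; in graph terms this says that no such Schreier graph contains a set of vertices with finite edge boundary whose two sides are both infinite. Write $W=G\wr_X H=\bigl(\bigoplus_{x\in X}G\bigr)\rtimes H$. Since $G,H$ are finitely generated and $H$ acts on $X$ with finitely many orbits, $W$ is finitely generated; I fix once and for all the adapted generating set $S_W=S_H\sqcup\bigsqcup_i S_G^{(x_i)}$, where $S_H$ generates $H$ and $S_G^{(x_i)}$ is a copy of a generating set of $G$ placed at one representative $x_i$ of each $H$-orbit. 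As the number of ends is independent of the finite generating set, this is harmless. The implication ``$G,H$ have FW and $X$ finite $\Rightarrow W$ has FW'' I would dispatch by permanence: for finite $X$ one has $W=G^{\abs{X}}\rtimes H$, and FW is stable under finite direct products and under extensions, so $G^{\abs{X}}$ and then $W$ inherit FW.

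For the converse I would build a single versatile family of Schreier graphs and read all three conclusions off it. Fix $x_0\in X$, let $O=Hx_0$ be its orbit and $H_0=\stab_H(x_0)$, and for an arbitrary subgroup $L\le G$ set
\[
  M_L=\Bigl(L^{(x_0)}\oplus\bigoplus_{x\neq x_0}G\Bigr)\rtimes H_0\ \le\ W,
\]
where $L^{(x_0)}$ denotes $L$ placed at the coordinate $x_0$. A direct coset computation identifies $W/M_L\cong O\times(G/L)$ with $W$-action $(f,h)\cdot(p,gL)=(hp,\,f(hp)\,gL)$. With the generating set $S_W$ the graph $\Sch(W,O\times(G/L),S_W)$ then has a transparent ``blow-up'' structure: the generators $S_H$ give, for each fixed coset $gL$, a copy $\Theta_{gL}$ of $\Sch(H,H/H_0,S_H)$ on $O\times\{gL\}$; the generators $S_G^{(x_0)}$ produce edges only over the point $x_0$, joining $(x_0,gL)$ to $(x_0,sgL)$ and thereby reproducing $\Sch(G,G/L,S_G)$ along the column $\{x_0\}\times(G/L)$; and every remaining generator acts trivially. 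In words, $\Sch(W,O\times(G/L),S_W)$ is $\Sch(G,G/L,S_G)$ with each vertex blown up to a copy of $\Sch(H,H/H_0,S_H)$, distinct copies being glued only through single edges lying over $x_0$.

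Now I read off the three statements. First, $H=W/\bigl(\bigoplus_X G\bigr)$ is a quotient of $W$, and FW passes to quotients, so $H$ has FW. Second, to force $X$ finite I argue contrapositively: if $X$ is infinite then some orbit $O$ is infinite, and taking $L=\{1\}$ I consider $C=\Theta_{1}=O\times\{1\}$ inside $\Sch(W,O\times G,S_W)$. The only edges leaving $C$ are the finitely many vertical edges issuing from $(x_0,1)$, so $C$ has finite edge boundary, while both $C$ (as $O$ is infinite) and its complement (which contains a full copy $\Theta_{s}$) are infinite; hence this Schreier graph has at least two ends and $W$ fails FW. Third, for $G$ I again argue contrapositively: if $G$ lacks FW, choose $L\le G$ with $\Delta:=\Sch(G,G/L,S_G)$ having at least two ends, and a finite $F\subseteq G/L$ separating $\Delta\setminus F$ into infinite components $A$ and $B$. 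Deleting the finite vertex set $\{x_0\}\times F$ from the blow-up leaves $\bigcup_{gL\in A}\Theta_{gL}$ and $\bigcup_{gL\in B}\Theta_{gL}$ as disjoint infinite subgraphs, because any path changing copies must traverse a vertical edge over $x_0$ and any route from $A$ to $B$ in $\Delta$ meets $F$; so $\Sch(W,O\times(G/L),S_W)$ has at least two ends and $W$ fails FW.

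I expect the crux to be exactly the end-counting in the blow-up: showing that deleting a finite boundary (respectively the finite separating column $\{x_0\}\times F$) really leaves two infinite components. This hinges on the structural fact that distinct copies $\Theta_{gL}$ are joined only over the single point $x_0$, which itself depends on the coset identification of $W/M_L$ and on the adapted generating set $S_W$; verifying these carefully is where the work lies, whereas the quotient argument for $H$ and the permanence argument for the easy direction are routine.
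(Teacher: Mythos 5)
Your converse direction is essentially the paper's own proof in different notation: the identification $W/M_L\cong O\times (G/L)$ with action $(f,h)\cdot(p,gL)=(h.p,\,f(h.p)\,gL)$ is exactly the paper's imprimitive action of $G\wr_XH$ on $G/K\times X'$, your copies $\Theta_{gL}$ are its leaves $Y_g$, and your key structural point --- that distinct copies meet only through the finitely many vertical edges over $x_0$ --- is its leaf structure. Your end-counting is, if anything, more detailed than the paper's: for the claim that $G$ inherits FW the paper merely asserts that the leaf structure bounds the number of ends of the column by that of the ambient graph, whereas you spell out the separation by the finite set $\{x_0\}\times F$; and your finite-edge-boundary criterion for having at least two ends is equivalent to Definition~\ref{Def:Ends} for connected locally finite graphs, so that is sound. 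The genuine divergence is the forward implication, which you dispatch by invoking stability of FW under finite direct products and under extensions. Those permanence statements are true and citable (they can be extracted from Cornulier's work), so your proof is not wrong; but they are precisely what the paper takes the trouble to prove elementarily: its Lemma~\ref{Lemma:Semidirect_ends} shows that if $N$ and $H$ are finitely generated with property FW then $N\rtimes H$ has FW, by a nontrivial argument (enlarge a finite set $K$ to $K'$ by adding the finite $H$-leaves it meets, then connect any two points of $\Gamma\setminus K'$ by a concatenation of paths in $\Gamma_x^H$, $\Gamma_z^N$, $\Gamma_y^H$, using an injectivity trick to find a suitable intermediate vertex $z$ avoiding $K'$). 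Calling this half ``routine'' undersells it: within the Schreier-graph framework it is the most delicate step of the whole paper, and a self-contained proof in the paper's elementary spirit would have to include it (or an equivalent), whereas your version outsources one implication of Theorem~\ref{Thm:Main} to the literature.
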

The aim of this note is to give an explicit and elementary proof of this fact using a characterization of property~FW via the number of ends of Schreier graphs, see Subsection \ref{Subsection:FW} for the relevant definitions.

At this point, the curious reader might have two questions. First, is it possible to extend Theorem \ref{Thm:Main} beyond the realms of finitely generated groups and of actions with finitely many orbits? And secondly, is there a link between Theorems \ref{T:Wreath_prop_T} and \ref{Thm:Main}?
In both cases, the answer is yes.

This is the subject of the more technical paper \cite{LS2021}, which gives an unified proof of Theorems \ref{T:Wreath_prop_T} and \ref{Thm:Main} as well as of similar results for the Bergman's property~and more.

\paragraph{Organization of the paper}
The next section contains all the definitions as well as some examples, while Section \ref{Section:Proof} is devoted to the proof of Theorem~\ref{Thm:Main} and some related results.
\section{Definitions and examples}

This section contains all the definitions, some standard but useful preliminary results as well as some examples.

\subsection{Ends of Schreier graphs and property~FW}
\label{Subsection:FW}
In what follows, we will always assume that generating sets of groups are \emph{symmetric}, that is we will look at $S\subset G$ such that $s\in S$ if and only if $s^{-1}\in S$.
Our graphs will be undirected and we will sometimes identify a graph with its vertex set.
%
%
%\begin{defn}
%Let $G$ be a group, $H$ a subgroup of $G$ and $S$ a symmetric generating set. The \emph{(left) Schreier graph} $\Sch(G,H;S)$ is the graph with vertices the left cosets $gH=\setst{gh}{h\in H}$ and for every $\{s,s^{-1}\}\subset S$ an edge between $gH$ and $sgH$.% for every $s$ in $S$.
%\end{defn}
%%
%%
%Observe that with the above definition, a Schreier graph may have loops as well as multiple edges. Moreover, in $\Sch(G,H;S)$ every vertex has degree $\abs S$.
%
%If $X$ is a set with a left action $G\curvearrowright X$ and $S$ is a symmetric generating set for $G$, the corresponding (left) \emph{orbital graph} $\Sch_{\mathcal O}(G,X;S)$ is the graph with vertex set $X$ and with an edge between $x$ and $y$ for every $s$ in $S$ such that $s.x=y$.
%As the notation suggests, these two definitions are two faces of the same coin. Indeed, we have $\Sch(G,H;S)=\Sch_{\mathcal O}(G,G/H;S)$ for the  action by left multiplication of $G$ on $G/H$, while   for every $x\in X$, the graph $\Sch(G,\stab_G(x);S)$ is equal to the connected component of $x$ in $\Sch_{\mathcal O}(G,X;S)$.
%
%Schreier graphs are generalizations of the well-known \emph{Cayley graphs}, with $\Cayley(G;S)=\Sch(G,\{1\};S)$, see Figures~\ref{Figure:CayleyOfZ} and~\ref{Figure:CayleyOfZ2} for some examples.
%
%

Let $G$ be a group with symmetric generating set $S$, and let $X$ be a non-empty set endowed with a left action $G\curvearrowright X$.
The corresponding (left) \emph{orbital graph} $\Sch_{\mathcal O}(G,X;S)$ is the graph with vertex set $X$ and with an edge between $x$ and $y$ for every $s$ in $S$ such that $s.x=y$.
\begin{defn}
Let $G$ be a group, $H$ a subgroup of $G$ and $S$ a symmetric generating set. The \emph{(left) Schreier graph} $\Sch(G,H;S)$ is the graph with vertices the left cosets $gH=\setst{gh}{h\in H}$ and for every $\{s,s^{-1}\}\subset S$ an edge between $gH$ and $sgH$.% for every $s$ in $S$.
\end{defn}
As the notation suggests, these two definitions are two faces of the same coin. More precisely, Schreier graphs of $G=\langle S\rangle$ are exactly the orbital graphs for transitive actions (equivalently: the connected components of orbital graphs) of $G=\langle S\rangle$.
The correspondance is given by $H\mapsto G/H$ and $x_0\in X\mapsto\stab_G(x_0)$, where $x_0$ is an arbitrary element of $X$.
%Indeed, we have $\Sch(G,H;S)=\Sch_{\mathcal O}(G,G/H;S)$ for the  action by left multiplication of $G$ on $G/H$, while   for every $x\in X$, the graph $\Sch(G,\stab_G(x);S)$ is equal to the connected component of $x$ in $\Sch_{\mathcal O}(G,X;S)$.

Schreier graphs are generalizations of the well-known \emph{Cayley graphs}, with $\Cayley(G;S)=\Sch(G,\{1\};S)$, see Figures~\ref{Figure:CayleyOfZ} and~\ref{Figure:CayleyOfZ2} for some examples.

If $S$ and $T$ are two generating sets of $G$, the graphs $\Sch(G,H;S)$ and $\Sch(G,H;T)$ does not need to be isomorphic. However, if both $S$ and $T$ are finite, then $\Sch(G,H;S)$ and $\Sch(G,H;T)$ are quasi-isometric, see~\cite[IV.B.21.iii]{DelaHarpe2000} for a proof and Figure~\ref{Figure:CayleyOfZ} for an example. The only fact we will use about quasi-isometries is that they preserve ``large-scale properties'' of the graph, as for example the number of ends.
Observe that the requirement that both $S$ and $T$ are finite is crucial for the existence of a quasi-isometry between the corresponding Schreier graphs. Indeed, for every group  $\Cayley(G;G\setminus\{1\})=\Sch(G,\{1\};G\setminus\{1\})$ is always a complete graph on $\abs{G}$ vertices, in particular  $\Cayley(\Z;\{1\})$ and $\Cayley(\Z;\Z\setminus\{0\})$ are not quasi-isometric.

\begin{figure}[htbp]\centering
\includegraphics{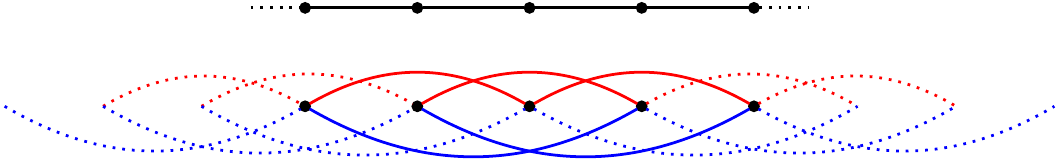}
\caption{Fragments of two Cayley graphs of $\Z$ ($2$ ends), for the standard generating set $\{\pm1\}$ and for the generating set $\{\textcolor{red}{\pm2},\textcolor{blue}{\pm3}\}$.}
\label{Figure:CayleyOfZ}
\end{figure}
\begin{figure}[htbp]\centering
\begin{subfigure}{0.5\textwidth}
\centering
\includegraphics{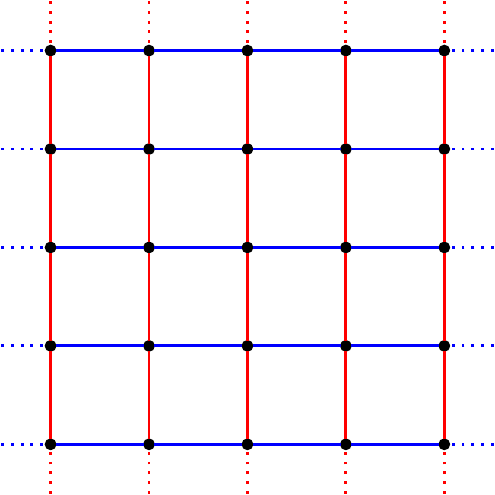}
\end{subfigure}%
\begin{subfigure}{0.5\textwidth}
\centering
\scalebox{0.8}{\includegraphics{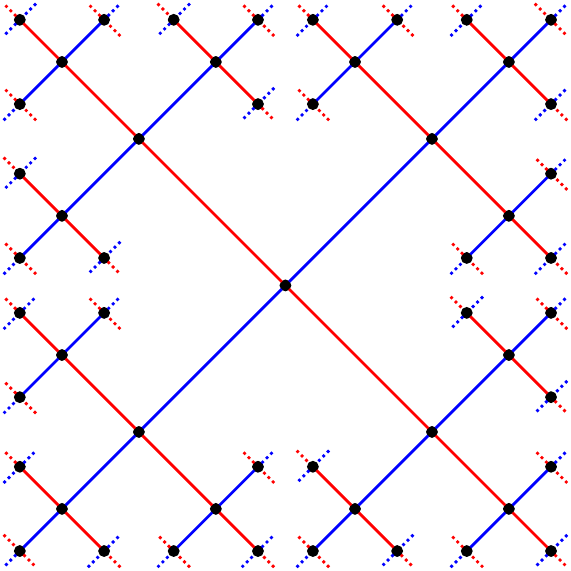}}
\end{subfigure}
\caption{Fragments of the Cayley graphs of $\Z^2$ ($1$ end) on the left and of $F_2$ (infinitely many ends) on the right; with standard generating sets.}
\label{Figure:CayleyOfZ2}
\end{figure}
\begin{figure}[htbp]\centering
\includegraphics{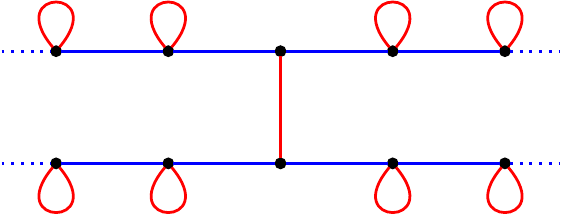}
\caption{A fragment of a Schreier graph (with $4$ ends) of the free group $F_2=\langle \textcolor{red}{x^{\pm1}},\textcolor{blue}{y^{\pm1}}\rangle$ for the subgroup $H=\setst{x^2,y^nxy^{-n},xy^nxy^{-n}x^{-1}}{n\in\Z\setminus\{0\}}$.}
\label{Figure:SchreierOfF2}
\end{figure}

Let $\Gamma$ be a graph and $K$ a finite subset of vertices. The graph $\Gamma\setminus K$ is the subgraph of $\Gamma$ obtained by deleting all vertices in $K$ and all edges containing them. This graph is not necessarily connected.
\begin{defn}\label{Def:Ends}
Let $\Gamma$ be a graph. The \emph{number of ends} of $\Gamma$ is the supremum, taken over all finite $K$, of the number of infinite connected components of $\Gamma\setminus K$.
\end{defn}
There exists other characterizations of the number of ends in graphs, %notably in terms of equivalence classes of rays,
see~\cite{MR1967888} and the references therein, but Definition~\ref{Def:Ends} is the one that best suits our purpose.
A locally finite graph (i.e. such that every vertex has finite degree) is finite if and only if it has $0$ end.

An important fact about the number of ends of a graph, is that it is an invariant of quasi-isometry, see~\cite{MR1213151}. In particular, if $G$ is a finitely generated group it is possible to speak about the number of ends of the Schreier graph $\Sch(G,H;S)$ without specifying a particular finite generating set $S$.
By a celebrated result of Hopf~\cite{MR10267}, the number of ends of a Cayley graph of a finitely generated group can only be $0$, $1$, $2$ or infinite (in which case it is uncountable), see Figures~\ref{Figure:CayleyOfZ} and~\ref{Figure:CayleyOfZ2} for some examples.
On the other hand, Schreier graphs may have any number of ends in $\mathbf N\cup\{\infty\}$, see Figure~\ref{Figure:SchreierOfF2} for an example of a graph with $4$ ends.
In fact, every regular graph of even degree is isomorphic to a Schreier graph,~\cite{MR0450121,MR1358635}.

We are now finally able to introduce property~FW. Instead of giving the original definition in terms of actions on wall spaces, we will use an equivalent one for finitely generated groups, which essentially follows from~\cite{MR1347406}, see~\cite{Cornulier2013} for a direct proof.
\begin{defn}
A finitely generated group $G$ has \emph{property~FW} if all its Schreier graphs have at most one end.
\end{defn}
It directly follows from the definition that all finite groups have property~FW, but that $\Z$ does not have it.
In fact, if $G$ is a finitely generated group with an homomorphism onto $\Z$, then it does not have FW. Indeed, in this case $G\cong H\rtimes \Z$  for some $H$ and the Schreier graph $\Sch(G,H;S)$ is isomorphic to a Cayley graph of $\Z\cong G/H$ and hence has $2$ ends.

Property~FW admits many distinct characterizations that allow to define it for groups that are non-necessarily finitely generated and even for topological groups. We refer the reader to~\cite{Cornulier2013} for a survey of these characterizations.

\subsection{Wreath products}
Let $X$ be a set and $G$ a group. We view
$\bigoplus_XG$ as the set of functions from $X$ to $G$ with finite support:
\[
	\bigoplus_XG=\setst{\varphi\colon X\to G}{\varphi(x)=1 \textnormal{ for all but finitely many }x}.
\]
This is naturally a group, where multiplication is taken pointwise.

If $H$ is a group acting on $X$, then it naturally acts on $\bigoplus_XG$
by $(h.\varphi)(x)=\varphi(h^{-1}.x)$.
This leads to the following standard definition.
\begin{defn}\label{Def:WreathProd}
Let $G$ and $H$ be groups and $X$ be a set on which $H$ acts.
The \emph{(retricted) wreath product} $G\wr_XH$ is the group $(\bigoplus_XG)\rtimes H$.
\end{defn}
A prominent  source of examples of wreath products is the ones of the form $G\wr_HH$, where $H$ acts on itself by left multiplication.
In particular, the group $(\Z/2\Z)\wr_\Z\Z$ is well-known under the name of the \emph{lamplighter group}.
The direct product $G\times H$ corresponds to wreath product over a singleton $G\wr_{\{*\}}H$.

Let $S$ be a generating set of $G$ and $T$ a generating set of $H$.
Let $\{x_i\}_{i\in I}$ be a choice of a representative in each $H$-orbit.
Finally, let $\delta_x^s$ be the element of $\bigoplus_XG$ defined by $\delta_x^s(x)=s$ and $\delta_x^s(y)=1_G$ if $y\neq x$ and let $\mathbf 1$ be the constant function with value $1_G$.
It is a direct computation that
\[
	\setst{(\delta_{x_i}^s,1_H)}{s \in S,i\in I} \cup \setst{(\mathbf 1,t)}{t \in T}
\]
is a generating set for $G\wr_XH$.

On the other hand, if $\setst{(\varphi_i,h_i)}{i\in I}$ is a generating set of $G\wr_XH$, then $\setst{h_i}{i\in I}$ is a generating set of $H$ while $\setst{\varphi_i(x)}{i\in I,x\in X}$ is a generating set of $G$.
Observe that since the $\varphi_i$ take only finitely many values, if $I$ is finite, so is $\setst{\varphi_i(x)}{i\in I,x\in X}$.
We hence recover the following characterization of the finite generation of $G\wr_XH$.

\begin{lem}
The group $G\wr_XH$ is finitely generated if and only if both $G$ and $H$ are finitely generated and $H$ acts on $X$ with finitely many orbits.
\end{lem}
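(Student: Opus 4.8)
The plan is to prove both implications of the biconditional by translating the algebraic statement into a clean combinatorial one about the generating sets exhibited just before the lemma. The key observation, which the paper has essentially set up, is that finite generation of a group is equivalent to the existence of \emph{some} finite generating set, and that the two displayed descriptions allow us to convert between a generating set of $G\wr_XH$ and generating sets of the factors $G$ and $H$.

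For the ``if'' direction, suppose $G=\langle S\rangle$ and $H=\langle T\rangle$ with $S,T$ finite, and that $H$ acts on $X$ with finitely many orbits, so the index set $I$ of orbit representatives $\{x_i\}_{i\in I}$ is finite. Then the set
\[
	\setst{(\delta_{x_i}^s,1_H)}{s\in S,\,i\in I}\cup\setst{(\mathbf 1,t)}{t\in T}
\]
is, as the paper already asserts by direct computation, a generating set for $G\wr_XH$, and it is manifestly finite since it is indexed by the finite set $(S\times I)\sqcup T$. Hence $G\wr_XH$ is finitely generated.

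For the ``only if'' direction, suppose $G\wr_XH$ is finitely generated, and fix a \emph{finite} generating set, which we may write as $\setst{(\varphi_i,h_i)}{i\in I}$ with $I$ finite. The projection $G\wr_XH\to H$ sending $(\varphi,h)\mapsto h$ is a surjective homomorphism, so the images $\setst{h_i}{i\in I}$ generate $H$; being a finite set, this shows $H$ is finitely generated. Similarly, reading off the values of the functions, the set $\setst{\varphi_i(x)}{i\in I,\,x\in X}$ generates $G$; the point, already noted in the excerpt, is that each $\varphi_i$ has finite support and takes only finitely many values, so a finite $I$ forces this value-set to be finite, giving $G$ finitely generated. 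The remaining and most delicate step is to show that $H$ acts on $X$ with finitely many orbits. The main obstacle here is that finitely many generators with finitely supported $\varphi_i$ impose a global constraint on the orbit structure: one argues that the union $F=\bigcup_{i\in I}\operatorname{supp}(\varphi_i)$ is finite, and that in the subgroup generated by our generators, the support of any element's $\bigoplus_XG$-component is contained in the $H$-translates of $F$ — so the coordinates that can ever be made nontrivial all lie in finitely many $H$-orbits. Any $H$-orbit not meeting $F$ would correspond to a coordinate on which every element of $G\wr_XH$ is trivial, which is impossible unless $G$ is trivial; since $G$ is assumed non-trivial, every orbit must meet the finite set $F$, and thus there are only finitely many orbits. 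Combining the three conclusions yields the claim.
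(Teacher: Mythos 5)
Your proof is correct, and on the one non-trivial point it takes a genuinely different route from the paper. Everything except the finiteness of $X/H$ is handled identically in both proofs, namely via the explicit generating sets discussed just before the lemma: the standard generating set for the ``if'' direction, and the projections/values of a finite generating set for the finite generation of $H$ and $G$. For the finiteness of $X/H$, however, the paper argues through the abelianization: finite generation passes to quotients, $(G\wr_XH)^{\ab}\cong(\bigoplus_{X/H}G^{\ab})\times H^{\ab}$, and an infinite direct sum of non-trivial abelian groups is never finitely generated, so $X/H$ must be finite. You instead track supports: every element of the subgroup generated by $\setst{(\varphi_i,h_i)}{i\in I}$ has its $\bigoplus_XG$-component supported in the $H$-invariant set $H.F$, where $F=\bigcup_{i\in I}\operatorname{supp}(\varphi_i)$ is finite; since $G$ is non-trivial, each $x\in X$ carries some non-trivial element $(\delta_x^g,1_H)$ of the wreath product, so every $H$-orbit must meet $F$, giving at most $\abs{F}$ orbits. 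The comparison is instructive: the paper's argument is a one-liner, but it only yields information when $G^{\ab}\neq 1$ --- for a non-trivial perfect $G$ (say a finitely generated infinite simple group) the summand $\bigoplus_{X/H}G^{\ab}$ is trivial whatever $X/H$ is, so the abelianization proof as written has a gap in that case, which your support argument closes, since it uses only $G\neq 1$. Note also that some non-triviality assumption on $G$ is genuinely needed and is missing from the statement of the lemma (though it appears in the main theorem): for $G=1$ and $X$ with infinitely many $H$-orbits, $G\wr_XH\cong H$ is finitely generated whenever $H$ is. You make this hypothesis explicit, which is a further point in favour of your write-up.
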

\begin{proof}
If $G\wr_XH$ is finitely generated, so is its abelianization $(G\wr_XH)^{\ab}\cong (\bigoplus_{X/H}G^{\ab})\times H^{\ab}$, which implies that the orbit set $X/H$ is finite. The other implications directly follow from the above discussion on generating sets.
\end{proof}
%
%
%Using the above lemma, we could reformulate Theorem~\ref{Thm:Main} in the following way:
%%
%%
%\begin{prop}
%Let $G$, $H$ be two groups with $G$ non-trivial and $X$ a set on which $H$ acts. Suppose that all three of $G$, $H$ and $G\wr_XH$ are finitely generated. Then the wreath product $G\wr_XH$ has property~FW if and only if $G$ and $H$ have property~FW and $X$ is finite.
%\end{prop}
%
%
%Using the above lemma, it is possible to replace the hypothesis ``$H$ acts on $X$ with finitely many orbits'' in Theorem~\ref{Thm:Main} by ``$G\wr_XH$ is finitely generated''.
%While the new statement is formally equivalent to the old one, it hints the fact that the finite generation hypothesis on $G$, $H$ and $G\wr_XH$ are not necessary, but artefacts of the use of Schreier graphs.
Using the above lemma, we could reformulate Theorem~\ref{Thm:Main} in the following way: \textit{Let $G$, $H$ be two groups with $G$ non-trivial and $X$ a set on which $H$ acts. Suppose that all three of $G$, $H$ and $G\wr_XH$ are finitely generated. Then the wreath product $G\wr_XH$ has property~FW if and only if $G$ and $H$ have property~FW and $X$ is finite.}
While this formulation is formally equivalent to Theorem~\ref{Thm:Main}, it  hints the fact that the finite generation hypothesis on $G$, $H$ and $G\wr_XH$ are not necessary, but artefacts of using Schreier graphs in the proof. Indeed, the result remains true without these hypothesis, see \cite[Propositions 5.B.3 \& 5.B.4]{Cornulier2013} and \cite[Theorem A]{LS2021}.

\section{Proof of the main result}
\label{Section:Proof}

This section is devoted to the proof of Theorem~\ref{Thm:Main}. This proof is split into two parts: Lemma \ref{Lemma:Semidirect_ends} and its Corollary \ref{Cor:Wreath_ends}, and Lemma \ref{Lem:Wreath_groups_ends}.

We begin by an easy result on quotients.
\begin{lem}\label{Lem:Quotient}
Let $G$ be a finitely generated group and $H$ a quotient of $G$. If $G$ has FW, then so does $H$.
\end{lem}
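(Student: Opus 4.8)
The plan is to use the definition of property~FW directly: every Schreier graph of the quotient will be shown to coincide (as an abstract graph) with a Schreier graph of $G$, so that the one-endedness of the latter transfers to the former. First I would fix a surjective homomorphism $\pi\colon G\to H$ and note that, being a quotient of a finitely generated group, $H$ is itself finitely generated, so that property~FW is meaningful for it. To verify FW for $H$, I would take an arbitrary subgroup $K\le H$ and produce a Schreier graph of $H$ relative to $K$ having at most one end.

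The key step is an identification of Schreier graphs. Fix a finite symmetric generating set $S$ of $G$; then $\pi(S)$ is a finite symmetric generating set of $H$. Writing $\tilde K=\pi^{-1}(K)\le G$, a subgroup containing $\ker\pi$, I claim that
\[
	g\tilde K\mapsto \pi(g)K
\]
is a well-defined graph isomorphism from $\Sch(G,\tilde K;S)$ onto $\Sch(H,K;\pi(S))$. Well-definedness and injectivity both follow at once from the chain of equivalences $g\tilde K=g'\tilde K\iff g^{-1}g'\in\tilde K\iff \pi(g)^{-1}\pi(g')\in K\iff \pi(g)K=\pi(g')K$, and surjectivity follows from the surjectivity of $\pi$. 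On edges, an edge of $\Sch(G,\tilde K;S)$ joins $g\tilde K$ to $sg\tilde K$ for some $s\in S$, and this maps to the edge of $\Sch(H,K;\pi(S))$ joining $\pi(g)K$ to $\pi(s)\pi(g)K$; conversely every edge of the target arises this way, since $\pi(S)$ is precisely the image of $S$.

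Finally I would conclude. Since $G$ has property~FW, the graph $\Sch(G,\tilde K;S)$ has at most one end, and hence so does the isomorphic graph $\Sch(H,K;\pi(S))$. Because the number of ends of a Schreier graph of a finitely generated group is independent of the chosen finite generating set, the subgroup $K$ yields a Schreier graph of $H$ with at most one end. As $K$ was arbitrary, all Schreier graphs of $H$ have at most one end, that is, $H$ has property~FW. The only genuine content lies in the graph isomorphism of the middle step, and even this is essentially bookkeeping built on the bijection $G/\tilde K\leftrightarrow H/K$ induced by $\pi$; I do not expect any real obstacle beyond tracking left cosets consistently.
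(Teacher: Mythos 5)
Your proposal is correct and follows essentially the same route as the paper: pull the subgroup $K$ back to $\tilde K=\pi^{-1}(K)$, identify $\Sch(G,\tilde K;S)$ with $\Sch(H,K;\pi(S))$ via the induced coset bijection, and transfer one-endedness. The only point treated more carefully in the paper is that distinct generators in $S$ may have equal images under $\pi$, so the identification is really an isomorphism up to loops and multiple edges --- a harmless discrepancy, since these do not affect the number of ends.
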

\begin{proof}
First, remark that if $G$ is generated by a finite symmetric set $S$, the group $H$ is generated by the projection of $S$ that we will also denote by $S$.
Moreover, any generating set of $H$ can be obtained in such a way.

Let $K$ be any subgroup of $H$ and $L$ its preimage in $G$.
As $G/L \cong H/K$, forgetting about loops and multiedges, the Schreier graphs $\Sch(G,L;S)$ and $\Sch(H,K;S)$ are isomorphic.
By assumption on $G$, the graph $\Sch(G,L;S)$ has at most one end.
As adding loops or doubling edges do not change the number of ends, $\Sch(H,K;S)$ has also at most one end, and therefore $H$ has property~FW.
%If $H$ does not have property~FW, there exists a subgroup $K$ of $H$ such that the Schreier graph $\Sch(H, K;S)$ has more than one end.
%Let $L$ be the preimage of $K$ in $G$. As $G/L \cong H/K$, forgetting about loops and multiedges, the Schreier graphs $\Sch(G,L;S)$ and $\Sch(H,K;S)$ are isomorphic.
%As adding loops or doubling edges do not change the number of ends, $\Sch(G,L;S)$ has more than one end and therefore $G$ does not have property~FW.
\end{proof}
We also have the following lemma on semi-direct products:
\begin{lem}\label{Lemma:Semidirect_ends}
Let $N$ and $H$ be two finitely generated groups and $N\rtimes H$ a semi-direct product.
Then
\begin{enumerate}
\item If $N\rtimes H$ has property~FW, then so does $H$,
\item If both $N$ and $H$ have property~FW, then $N\rtimes H$ also has property~FW.
\end{enumerate}
\end{lem}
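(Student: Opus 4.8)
The first assertion is essentially free. Since $N$ is normal in $G := N \rtimes H$ and $G/N \cong H$, the group $H$ is a quotient of the finitely generated group $G$, so Lemma~\ref{Lem:Quotient} applies directly: if $G$ has property~FW then so does $H$. For the second assertion the plan is to fix an \emph{arbitrary} subgroup $L \leq G$ and show that the Schreier graph $\Gamma := \Sch(G,L;S)$ has at most one end. I would fix finite symmetric generating sets $S_N$ of $N$ and $S_H$ of $H$, lift each $t \in S_H$ to some $\tilde t \in G$, and take $S = S_N \cup \{\tilde t : t \in S_H\}$ (symmetrised); this generates $G$, and because the number of ends is a quasi-isometry invariant it does not matter which finite generating set I use.

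The structural heart is to view $\Gamma$ as fibred over the base graph $B := \Sch(H,\bar L;S_H)$, where $\bar L$ is the image of $L$ in $H$. The $N$-orbits on $G/L$ (the ``fibres'') are permuted by $G$ and are indexed by the transitive $H$-set $H/\bar L = V(B)$. The $S_N$-edges stay inside the fibres, each fibre being a connected copy of some $\Sch(N,\cdot\,;S_N)$, whereas normality of $N$ makes each $\tilde t$ send a fibre $F_b$ bijectively onto $F_{tb}$, so the $\tilde t$-edges project onto the $t$-edges of $B$. I would record two consequences: left multiplication by $g$ is a bijection $F_b \to F_{gb}$, so by transitivity of $B$ all fibres have the same cardinality; and the two FW hypotheses feed in as ``$B$ has at most one end'' (from $H$) and ``each fibre has at most one end'' (from $N$).

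The argument then splits on that common cardinality. If the fibres are finite they are connected of bounded size, hence of bounded diameter, and I would check that the projection $\pi \colon \Gamma \to B$ is a quasi-isometry: it is $1$-Lipschitz since it maps edges to edges or collapses them, and any base-path lifts through the $\tilde t$-edges up to a bounded correction inside the terminal fibre; thus $\Gamma$ is quasi-isometric to $B$ and inherits its bound of at most one end. If the fibres are infinite I would argue by contradiction, using that a graph with at least two ends carries a vertex set $A$ with $A$ and its complement infinite and only finitely many edges leaving $A$, i.e. $\abs{sA \triangle A} < \infty$ for every $s \in S$. Finiteness of the edge-boundary forces each one-ended fibre to meet $A$ almost purely, so $A \cap F_b$ is finite (``out'') or co-finite (``in'') in each $F_b$.

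To finish the infinite-fibre case I would distinguish whether both types occur. If some fibre is ``in'' and some is ``out'', I pick such $F_{b_+}$ and $F_{b_-}$ and move one onto the other by a $\tilde h$ with $h b_- = b_+$; then $\tilde h A \cap F_{b_+} = \tilde h(A \cap F_{b_-})$ is finite while $A \cap F_{b_+}$ is co-finite in an infinite fibre, so $\tilde h A \triangle A$ is infinite, contradicting almost-invariance. If all fibres share a type, say all ``out'', I apply almost-invariance along the finitely many $s \in S_N$: all but finitely many fibres then have $A \cap F_b$ invariant under every such $s$, hence under $N$, and transitivity of $N$ on the infinite fibre forces $A \cap F_b = \emptyset$ there, making $A$ finite — again a contradiction (the ``all in'' case is the same applied to the complement). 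The delicate point, and the part I expect to demand the most care, is exactly this infinite-fibre analysis: one might worry that gluing one-ended fibres could manufacture extra ends, just as $\Z$ splits into two one-ended rays. What rules this out, and drives both contradictions above, is that each $\tilde t$ supplies a \emph{full matching} between adjacent fibres rather than a single edge, which is precisely why a finite edge-boundary is so restrictive; I anticipate the main obstacle to be the bookkeeping here rather than any conceptual gap.
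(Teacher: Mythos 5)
Your proposal is correct, but it takes a genuinely different route from the paper's. Part 1 is identical (both reduce to Lemma~\ref{Lem:Quotient}). For part 2, the paper works with an arbitrary Schreier graph $\Gamma$ of $N\rtimes H$ and gives a single, case-free connectivity argument: it enlarges a given finite set $K$ to $K'$ by absorbing the finite $H$-orbit subgraphs meeting $K$, and then joins any two vertices of infinite components of $\Gamma\setminus K'$ by an explicit three-leg path (inside $\Gamma_x^H$, then $\Gamma_z^N$, then $\Gamma_y^H$), using transitivity and the injectivity of an explicit map $z\mapsto z'$ to guarantee that the intermediate points can be chosen outside $K'$. You instead exploit normality of $N$ to fibre $\Gamma$ over the Schreier graph $B$ of $H$, with fibres the $N$-orbits, and then split into two cases: when the (equicardinal) fibres are finite, $\Gamma$ is quasi-isometric to $B$ and inherits its number of ends; when they are infinite, you run a proof by contradiction with an almost-invariant set $A$ of finite edge-boundary, classifying fibres as finite or cofinite in $A$ and deriving a contradiction in the mixed and pure subcases. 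This is essentially Cornulier's commensurated-subset viewpoint on FW translated into graph language, whereas the paper's argument is a purely combinatorial path construction. Your version buys a cleaner structural picture (the finite-fibre quasi-isometry is a nice standalone statement, and the full matchings between fibres make the rigidity transparent), at the cost of a case analysis and of two standard facts left implicit: the dichotomy ``finite or cofinite'' for subsets of finite edge-boundary in a one-ended locally finite connected graph, and, more importantly, the fact that almost-invariance $\abs{sA\triangle A}<\infty$ for all $s\in S$ propagates to $\abs{gA\triangle A}<\infty$ for \emph{every} $g\in G$ (you apply it to $\tilde h$, which need not lie in $S$, in the mixed case). Both are routine one-line verifications, so I see no genuine gap, but they should be spelled out in a final write-up.
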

\begin{proof}
The first part follows from Lemma~\ref{Lem:Quotient}.

For the second part, let $S$, respectively $T$, denote a finite generating set of $N$, respectively $H$.
Then the group $G\coloneqq N\rtimes H$ is finitely generated by $U=(S\times\{1\}) \cup(\{1\}\times T)$.

Suppose that both $N$ and $H$ have property~FW. We want to show that every Schreier graph of $G$ has at most one end. If they are all finite, then there is nothing to prove (and $G$ is finite). So let $\Gamma$ be an infinite Schreier graph of $G$ with respect to the generating set $U$. The groups $N$ and $H$ act on the vertices of $\Gamma$ by restriction of the action of $G$. That is, $n.x = (n,1).x$ and $h.x = (1,h).x$ for every vertex $x$ of $\Gamma$.
For each vertex $x$ we define $\Gamma_x^H$ (and respectively $\Gamma_x^N$) as the Schreier graph obtained from the action of $H$ (respectively $N$) on the $H$-orbit (respectively $N$-orbit) of $x$. These are subgraphs of $\Gamma$. As $N$ and $H$ have property~FW, the graphs $\Gamma_x^H$ and $\Gamma_x^N$ are either finite or one-ended; and we will prove that this implies that $\Gamma$ has exactly one end.

Let $K$ be a finite set of vertices of $\Gamma$.
If $x$ is in $K$ and $\Gamma_x^H$ is finite, add all vertices of $\Gamma_x^H$ to $K$.
By doing so for every $x$ in $K$, we obtain a new finite set $K' \supset K$ of vertices of $\Gamma$.
We will show that $\Gamma\setminus K'$ has only one infinite connected component.
By definition of $K'$, if $x$ is not in $K'$, then either $\Gamma_x^H$ has one end or $\Gamma_x^H$ does not contain vertices of $K'$.

Let $x$ and $y$ be two vertices, each of them lying in some infinite connected component of $\Gamma\setminus K'$.
We will construct a path from $x$ to $y$ in $\Gamma\setminus K'$ as a concatenation of three smaller paths, see Figure~\ref{Figure:PathSemiDirect}, as follows.
First, a path in $\Gamma_x^H\setminus K'$ from $x$ to some $z$, then a path in $\Gamma_z^N\setminus K'$ from $z$ to some $z'\in (\Gamma_z^N\cap \Gamma_y^H)\setminus K'$, and finally a path in $\Gamma_y^H\setminus K'$ from $z'$ to $y$.
In order to finish the proof, it remains to exhibit elements $z$ and $z'$ and the three desired paths.
\begin{figure}[htbp]\centering
\includegraphics{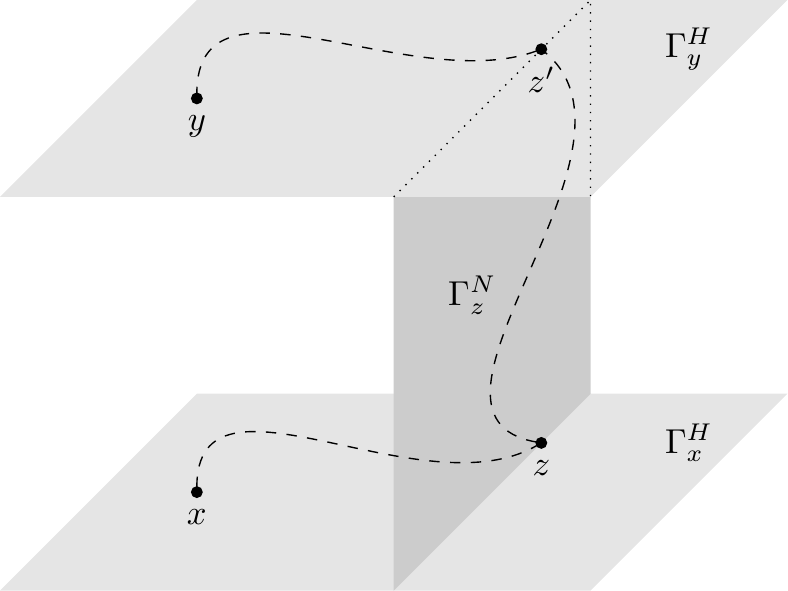}
\caption{The path between $x$ and $y$.}
\label{Figure:PathSemiDirect}
\end{figure}

The action of $G$ on $\Gamma$ being transitive, there exists an element $(n_0,h_0)$ of $N \rtimes H$ such that $(n_0,h_0).x = y$.
Since $K'$ is finite, the set $\Gamma_x^H\setminus K'$ is infinite.
Moreover, there is infinitely many $z$ in $\Gamma_x^H\setminus K'$ such that either $\Gamma_z^N$ is one-ended or $\Gamma_z^N$ does not intersect $K'$.
For such a $z$ there exists $h$ such that $(1,h).x=z$.
Now, the vertex $z'\coloneqq(hh_0^{-1}.n_0,h).x$ is both equal to $(hh_0^{-1}.n_0,1)(1,h_0).x=(hh_0^{-1}.n_0,1).z$ and to $(1,hh_0^{-1})(n_0,h_0).x=(1,hh_0^{-1}).y$. That is, $z'$ is in $\Gamma_z^N\cap \Gamma_y^H$.
A direct computation shows us that the map $z\mapsto z'$ is injective: $z_1'=z_2'$ if and only if $z_1=z_2$.
Since $K'$ is finite, there are only finitely many $z'$ in $K'$ and hence there are infinitely many $z\in \Gamma_x^H$ such that both $z$ and $z'$ are not in $K'$ and either $\Gamma_z^N$ is one-ended or $\Gamma_z^N$ does not intersect $K'$.

In order to finish the proof, observe that the three graphs $\Gamma_x^H$, $\Gamma_y^H$ and $\Gamma_z^N$ are all either one-ended or do not intersect $K'$.
Therefore, there is a path in $\Gamma_x^H\setminus K'$ from $x$ to $z$ as desired, and similarly for the paths from $z$ to $z'$ and $z'$ to $y$.
We have just proved that for any finite $K$ the graph $\Gamma\setminus K$ has only one infinite connected component and therefore that $\Gamma$ is one-ended.
\end{proof}
%
%
%As a direct corollary, we have
We have the following result on direct products that can be obtained as a corollary of Lemma~\ref{Lemma:Semidirect_ends}. It is also possible to give a short proof of it using Lemma~\ref{Lem:Quotient} and a direct argument; details are let to the reader.
\begin{cor}
Let $G$ and $H$ be two finitely generated groups. Then $G\times H$ has property~FW if and only if both $G$ and $H$ have property~FW.
\end{cor}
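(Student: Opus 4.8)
The plan is to recognize the direct product as a degenerate semidirect product and then read off both implications from results already in hand. Concretely, $G\times H$ is the semidirect product $G\rtimes H$ in which $H$ acts trivially on $G$; since the trivial action is a perfectly valid action, Lemma~\ref{Lemma:Semidirect_ends} applies verbatim with $N=G$. This single observation does essentially all the work, so the proof reduces to bookkeeping.

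For the ``if'' direction, I would suppose that both $G$ and $H$ have property~FW and simply invoke part~2 of Lemma~\ref{Lemma:Semidirect_ends} with $N=G$: it yields immediately that $G\rtimes H=G\times H$ has property~FW, with no further argument. For the ``only if'' direction, rather than go through part~1 of Lemma~\ref{Lemma:Semidirect_ends} I would appeal to the more elementary Lemma~\ref{Lem:Quotient}. Both $G$ and $H$ occur as quotients of $G\times H$ via the two coordinate projections, and $G\times H$ is finitely generated because $G$ and $H$ are; hence if $G\times H$ has property~FW, then Lemma~\ref{Lem:Quotient} forces each factor to inherit it. (Equivalently, one could apply part~1 of Lemma~\ref{Lemma:Semidirect_ends} to obtain FW for $H$, then use $G\times H\cong H\times G$ to obtain FW for $G$ by symmetry.)

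Since every step is a direct citation of a previously established result, there is no real obstacle to overcome here; the only facts to check are that a direct product is a semidirect product with trivial action and that the coordinate projections are surjective homomorphisms onto finitely generated groups, both of which are immediate. This is precisely why the statement is phrased as a corollary and why, as the excerpt remarks, an alternative elementary proof via Lemma~\ref{Lem:Quotient} together with a short direct Schreier-graph argument is also available.
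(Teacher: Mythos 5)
Your proposal is correct and is essentially the paper's intended argument: the paper derives this corollary from Lemma~\ref{Lemma:Semidirect_ends} (viewing $G\times H$ as a semidirect product with trivial action) and explicitly notes the alternative route through Lemma~\ref{Lem:Quotient}, which is exactly how you handle the ``only if'' direction. Your bookkeeping (finite generation of $G\times H$, surjectivity of the projections, the symmetry $G\times H\cong H\times G$) correctly fills in the details the paper leaves to the reader.
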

By iterating Lemma \ref{Lemma:Semidirect_ends}, we obtain
\begin{cor}\label{Cor:Wreath_ends}
Let $G$ and $H$ be two finitely generated groups and $X$ a set on which $H$ acts with finitely many orbits. Then,
\begin{enumerate}
\item
If $G\wr_X H$ has property~FW, then so does $H$,
\item
If both $G$ and $H$ have property~FW and $X$ is finite, then $G\wr_X H$ has property~FW.
\end{enumerate}
\end{cor}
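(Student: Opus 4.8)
The plan is to realize the wreath product as the semidirect product $G\wr_X H = N\rtimes H$ with $N=\bigoplus_X G$, so that both statements reduce to instances of Lemma~\ref{Lemma:Semidirect_ends}. For the first part, I would simply apply Lemma~\ref{Lemma:Semidirect_ends}(1): if $G\wr_X H = N\rtimes H$ has property~FW, then its quotient $H$ has property~FW. No finiteness of $X$ is needed here, only that $H$ is finitely generated, which is part of the hypotheses.

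For the second part, the extra input is that $X$ being finite makes the finite-support condition defining $\bigoplus_X G$ vacuous, so that $N=\bigoplus_X G\cong G^{\abs{X}}$ is an honest finite direct product of $\abs{X}$ copies of $G$. The first thing to check is that this $N$ has property~FW. Since each factor $G$ is finitely generated and has property~FW, and a finite direct product of finitely generated groups is again finitely generated, I would obtain this by a short induction: writing $G^{k}\cong G^{k-1}\times G$ and applying Lemma~\ref{Lemma:Semidirect_ends}(2) with trivial action (so that the semidirect product is a direct product) at each step, with base case $G^{1}=G$.

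With $N=G^{\abs{X}}$ now known to be finitely generated and to have property~FW, and with $H$ finitely generated and having property~FW by hypothesis, the group $G\wr_X H = N\rtimes H$ satisfies the hypotheses of Lemma~\ref{Lemma:Semidirect_ends}(2), which yields property~FW for the wreath product. This completes both implications.

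As for the main obstacle: there is essentially none at the level of argument, since all the real content has been front-loaded into Lemma~\ref{Lemma:Semidirect_ends}. The only point requiring care is the bookkeeping of the finite-generation hypotheses, so that Lemma~\ref{Lemma:Semidirect_ends} is legitimately applicable at each stage. In particular one must note that $\bigoplus_X G$ is finitely generated precisely because $X$ is finite; in the infinite case $\bigoplus_X G$ need not even be finitely generated, which is exactly the reason the finiteness of $X$ is indispensable for this direction.
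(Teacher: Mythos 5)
Your proposal is, in substance, the paper's own proof: the paper justifies this corollary with the single line ``By iterating Lemma~\ref{Lemma:Semidirect_ends}, we obtain\dots'', and your treatment of part~2 --- building up $\bigoplus_X G\cong G^{\abs{X}}$ by repeated applications of Lemma~\ref{Lemma:Semidirect_ends}(2) with trivial action, then applying Lemma~\ref{Lemma:Semidirect_ends}(2) once more to $\bigl(\bigoplus_X G\bigr)\rtimes H$ --- is exactly that iteration, with the finite-generation bookkeeping spelled out correctly.

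There is, however, one formal misstep in your part~1. Lemma~\ref{Lemma:Semidirect_ends} assumes that \emph{both} $N$ and $H$ are finitely generated, and in part~1 of the corollary $X$ may be infinite (only the number of $H$-orbits is assumed finite), in which case $N=\bigoplus_X G$ is not finitely generated whenever $G$ is nontrivial --- precisely the point you make yourself in your closing paragraph, but only in connection with part~2. So your claim that ``only that $H$ is finitely generated'' is needed does not match the stated hypotheses of the lemma, and you cannot literally invoke Lemma~\ref{Lemma:Semidirect_ends}(1) here. The repair is immediate: bypass the lemma and apply Lemma~\ref{Lem:Quotient} directly, since $H$ is a quotient of the finitely generated group $G\wr_X H$; indeed, this is all that the paper's proof of Lemma~\ref{Lemma:Semidirect_ends}(1) uses, and that argument never touches $N$. (The paper's own one-line proof is open to the same pedantic objection and admits the same repair, so this is a flaw in precision rather than in the underlying idea.)
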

The following Lemma finishes the proof of Theorem~\ref{Thm:Main}.
\begin{lem}\label{Lem:Wreath_groups_ends}
Let $G$ and $H$ be two finitely generated groups with $G$ non-trivial and such that $H$ acts on some set $X$ with finitely many orbits.
If $G\wr_XH$ has property~FW, then
\begin{enumerate}
\item $G$ has property~FW,
\item $X$ is finite.
\end{enumerate}
\end{lem}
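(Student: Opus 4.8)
The plan is to prove both statements in contrapositive form, by exhibiting, under the negation of each conclusion, a Schreier graph of $G\wr_X H$ with at least two ends. Fix finite symmetric generating sets $S$ of $G$ and $T$ of $H$, let $\{x_i\}_{i\in I}$ be orbit representatives, and use the generating set $U=\setst{(\delta_{x_i}^s,1_H)}{s\in S,i\in I}\cup\setst{(\mathbf 1,t)}{t\in T}$. Fix one representative $x_0$, set $H_0=\stab_H(x_0)$ and $X_0=H.x_0$, and for a subgroup $L\le G$ consider
\[
	Q_L=\setst{(\varphi,h)}{h\in H_0,\ \varphi(x_0)\in L}.
\]
A short computation, using that every $h\in H_0$ fixes $x_0$, shows that $Q_L$ is a subgroup.

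The central step is to identify $\Gamma_L\coloneqq\Sch(G\wr_X H,Q_L;U)$. A direct calculation shows that $(\varphi,h)$ and $(\psi,k)$ lie in the same left coset of $Q_L$ if and only if $h.x_0=k.x_0$ and $\varphi(h.x_0)L=\psi(h.x_0)L$; hence the cosets are in bijection with $X_0\times(G/L)$ via $(\varphi,h)\mapsto(h.x_0,\varphi(h.x_0)L)$, and the action reads $(\psi,k).(x,gL)=(k.x,\ \psi(k.x)\,gL)$. Reading off the generators yields the structure I want: each element $(\mathbf 1,t)$ moves the first coordinate and fixes the second, so every \emph{sheet} $X_0\times\{gL\}$ is a copy of the orbital graph $\Gamma_H\coloneqq\Sch(H,H_0;T)$ on $X_0$; each $(\delta_{x_i}^s,1)$ fixes the first coordinate, and since the only representative lying in $X_0$ is $x_0$, it acts by a loop unless that coordinate equals $x_0$, where $(\delta_{x_0}^s,1)$ sends $(x_0,gL)\mapsto(x_0,sgL)$. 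Thus the second coordinate changes only along the single \emph{column} $\{x_0\}\times(G/L)$, on which $\Gamma_L$ induces exactly $\Sch(G,L;S)$, and every sheet is attached to this column only at its unique point $(x_0,gL)$.

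For part (1), suppose $G$ does not have property~FW, so there is a subgroup $L$ with $\Sch(G,L;S)$ having at least two ends; choose a finite $K_0\subset G/L$ cutting this graph into at least two infinite pieces $A$ and $B$. Removing the finite set $\{x_0\}\times K_0$ from $\Gamma_L$, the vertices over $A$ and those over $B$ lie in distinct infinite components: they are infinite since $A$ and $B$ are, and they cannot be reconnected because any path changing the $G/L$-coordinate must use a column edge at $x_0$, hence projects to a path in $\Sch(G,L;S)\setminus K_0$ from $A$ to $B$, which does not exist. So $\Gamma_L$ has at least two ends and $G\wr_X H$ lacks property~FW.

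For part (2), suppose $X$ is infinite; as there are finitely many orbits I may take $x_0$ in an infinite orbit, so that $\Gamma_H$ is an infinite connected graph, and I set $L=\{1_G\}$, so the sheets are indexed by $G$ and each is a copy of $\Gamma_H$. Remove the single vertex $(x_0,1_G)$: the sheet over $1_G$, deprived of its only column point, still has an infinite component $C$, which is sealed off from the rest of $\Gamma_L$ since the only exit from a sheet is the removed column point, while another sheet exists (as $\abs{G}\ge 2$) and remains infinite. Hence $\Gamma_L$ again has at least two ends, so $X$ must be finite. The main obstacle is exactly the content of the second paragraph: getting the coset identification and action formula right, and then establishing the \emph{no-reconnection} property, namely that severing the column (part~1) or its single attaching vertex (part~2) cannot be bypassed through the sheets; once the description ``sheets indexed by $G/L$, glued along a column carrying $\Sch(G,L;S)$, with the coordinate in $G/L$ changing only at $x_0$'' is in place, both end counts follow readily.
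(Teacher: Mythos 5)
Your proposal is correct and takes essentially the same approach as the paper: your subgroup $Q_L$ is precisely the stabilizer of the point $(L,x_0)$ under the imprimitive action of $G\wr_X H$ on $(G/L)\times X_0$ that the paper uses, so your graph $\Gamma_L$, with its sheets glued along a column carrying $\Sch(G,L;S)$, is exactly the paper's leaf decomposition of that orbital Schreier graph. Your two cutting arguments (removing $\{x_0\}\times K_0$ to separate the column, and removing a single attaching vertex to isolate an infinite sheet) are the paper's end counts, with your explicit no-reconnection step simply spelling out what the paper summarizes as the number of ends of the column being bounded by that of $\Gamma$.
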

\begin{proof}
%We will prove the contrapositives.
%The idea is to construct a Schreier graph of $G\wr_XH$ with more than one end by exhibiting some well-chosen action of $G\wr_XH$.
Let us fix some finite generating sets $S$ and $T$ of $G$ and $H$ and let
\[
	U\coloneqq\setst{(\delta_x^s,1_H)}{s \in S} \cup \setst{(\mathbf 1,t)}{t \in T}
\]
be the standard generating set of $G\wr_XH$.
\paragraph{If $G\wr_XH$ has property~FW, then $X$ is infinite.}
Since $H$ acts on $X$ with finitely many orbits, it is enough to show that each orbit is finite.
So let $X'$ be an orbit and $x_0$ be an arbitrary vertex of $X'$.
The group $G$ acting on itself by left multiplications, we have the so-called \emph{imprimitive action} of the wreath-product $G\wr_XH$ on $Y\coloneqq G\times X'$:
\[
	(\varphi,h). (g,x) \coloneqq (\varphi(h.x)g, h.x).
\]
Since both $G\curvearrowright G$ and $H\curvearrowright X'$ are transitive, the action $G\wr_XH\curvearrowright Y$ is also transitive.
Therefore, the orbital Schreier graph of $G\wr_XH\curvearrowright Y$ is isomorphic to the Schreier graph $\Gamma\coloneqq\Sch\left(G\wr_XH, \stab((1_G,x_0)\right), U)$. We decompose this graph into leaves of the form $Y_g = \{ g \} \times X'$. There are two types of edges in $\Gamma$, which are coming from the two sets of generators, see Figure~\ref{Figure:Leaves}. The first ones, of the form $(\mathbf 1,t)$, give us on each leaf a copy of the orbital Schreier graph of $H \curvearrowright X'$. Indeed,
\[
	(\mathbf 1,t).(g,x) = (g, t.x).
\]
The second ones, of the form $(\delta_{x_0}^s,1)$, give us loops everywhere except on vertices of the form $(g,x_0)$. By direct computation, we see that the vertices $(g,x_0)$ and $(sg,x_0)$ connect the leaves $Y_g$ and $Y_{sg}$,
\[
	(\delta_{x_0}^s,1).(g,x) =
		\begin{cases}
		(g,x) & \textnormal{if }x \neq x_0, \\
		(sg, x) & \textnormal{if }x = x_0.
		\end{cases}
\]
\begin{figure}[htbp]\centering
\scalebox{0.85}{\includegraphics{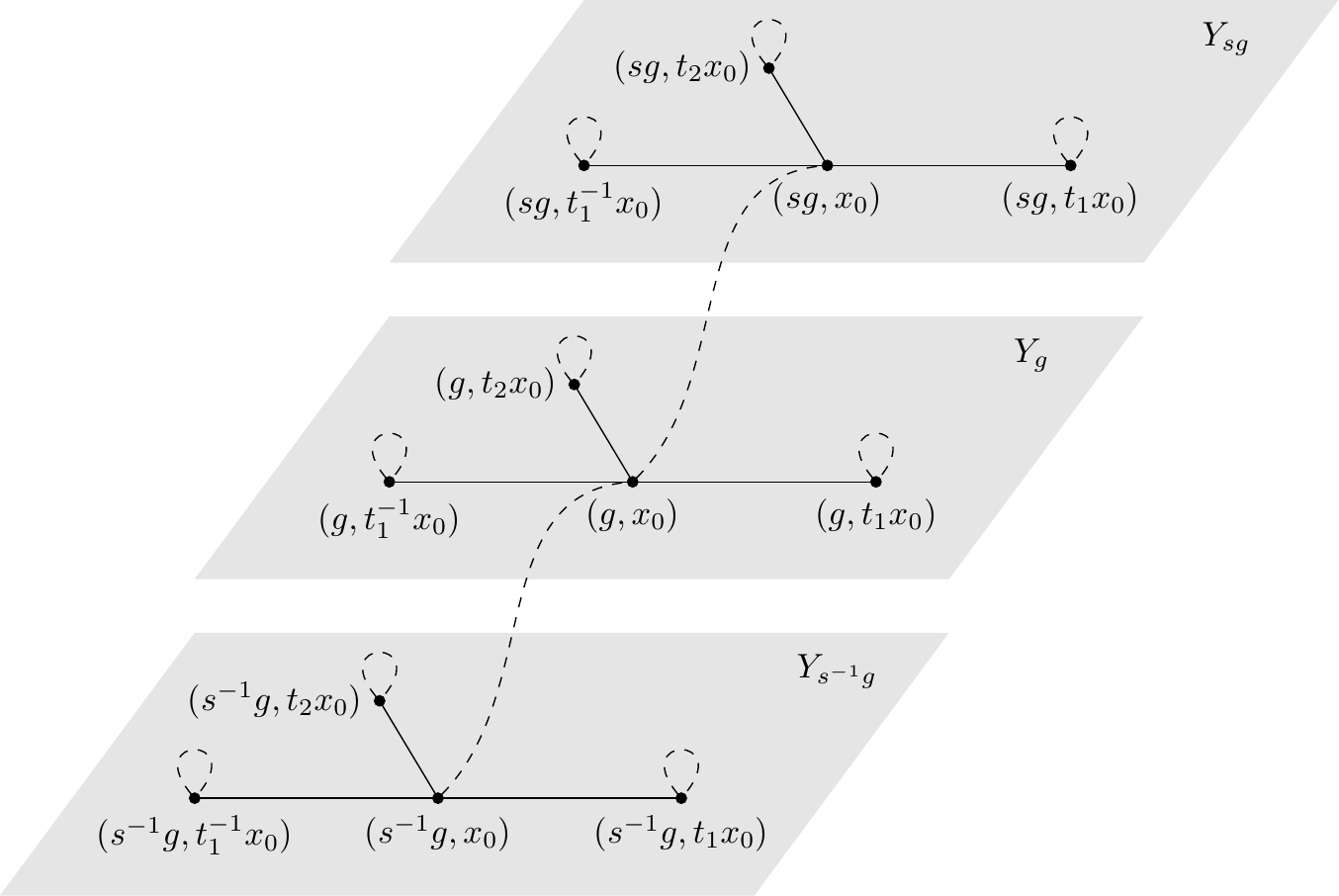}}
\caption{The leaf structure of the orbital Schreier graph of $G\wr_XH \curvearrowright Y$. Plain edges correspond to generators of the form $(\mathbf 1,t)$ while dotted edges correspond to generators of the form $(\delta_{x_0}^s,1)$.}
\label{Figure:Leaves}
\end{figure}

If we remove a vertex $(g,x_0)$ we disconnect the leaf $Y_g$ from the rest of $\Gamma$.
Since $\Gamma$ has at most one end and there is $\abs G\geq 2$ leaves, we deduce that all leaves $Y_g$ are finite, and hence that $X'$ itself is finite.
%
%
%
%\paragraph{Suppose now that $G$ does not have property~FW.} Then there exists a subgroup $K$ of $G$ such that $\Sch(G,K,S)$ has more than one end.
%Let $x_0$ be any point of $X$ and $X'$ be its orbit under the action of $H$.
%We have the imprimitive action of $G\wr_XH$ on $G/K\times X$, which restricts to an action on $G/K\times X'$:
%\[
%	(\varphi,h).(gK,x) = (\varphi(h.x) gK, h.x).
%\]
%As above, the action is transitive and the orbital Schreier graph of this action is isomorphic to a Schreier graph $\Gamma$ of $G\wr_XH$. We decompose this graph into leaves in the same way. Now we look at the subgraph made up of vertices $(g,x_0)$ and edges $(\delta_{x_0}^s,1)$ and we remark that it is isomorphic to the Schreier graph $\Sch(G,K,S)$ which has more than one end. Then $\Gamma$ has also more than one end, which finishes the proof.
\paragraph{The group $G$ has property~FW.}
Let $K$ be any subgroup of $G$. We will show that $\Sch(G,K,S)$ has at most  one end.
Let $x_0$ be any point of $X$ and $X'$ be its orbit under the action of $H$.
We have the imprimitive action of $G\wr_XH$ on $G/K\times X$, which restricts to an action on $G/K\times X'$:
\[
	(\varphi,h).(gK,x) = (\varphi(h.x) gK, h.x).
\]
As above, the action is transitive and the orbital Schreier graph of this action is isomorphic to a Schreier graph $\Gamma$ of $G\wr_XH$. We decompose this graph into leaves in the same way.
Now observe that $\Sch(G,K,S)$ is isomorphic to the subgraph $\Delta$ of $\Gamma$ consisting of vertices $\setst{(g,x_0)}{g\in G}$ and edges $\setst{(\delta_{x_0}^s,1)}{s\in S}$.
Due to the leaves structure of $\Gamma$, the number of ends of $\Delta$ is bounded above by the number of ends of $\Gamma$, and hence is at most one.
We conclude that $\Sch(G,K,S)$ too has at most one end.
\end{proof}
\paragraph{Acknowledgment}
The authors are thankful to A. Genevois, T. Nagnibeda and A. Valette for helpful comments on a previous version of this note, and to the Swiss National Fund for Scientific Research for its support.
The author also thank the anonymous referee for their valuable comments.

\enddocument